\theoremstyle{plain}
\newtheorem{lem}{Lemma}
\newtheorem{thm}{Theorem}
\newtheorem{cor}{Corollary}
\theoremstyle{remark}
\newtheorem{rem}{Remark}
\numberwithin{equation}{section}
\begin{document}


\title{On R. Chapman's ``evil determinant'': case $p\equiv 1\pmod 4$}

\author{
Maxim Vsemirnov\thanks{This research was supported in part by the
Dynasty Foundation and  RFBR (grant 09-01-00784-a) and the State
Financed Task project no.~6.38.74.2011 at St.~Petersburg State
University.}}

\date{\small St.~Petersburg Department of V.A.Steklov Institute of Mathematics, \\
27 Fontanka, St.~Petersburg, 191023, Russia\\
and \\
St.~Petersburg State University, \\
Department of Mathematics and Mechanics, \\
28 University prospekt, St. Petersburg, 198504,  Russia \\
E-mail: vsemir@pdmi.ras.ru}


\maketitle


\renewcommand{\thefootnote}{}

\footnote{2010 \emph{Mathematics Subject Classification}: Primary 11C20; Secondary 11R29, 15A15, 15B05.}

\footnote{\emph{Key words and phrases}: Legendre symbol, determinant, Cauchy determinant, Dirichlet's class number formula.
.}

\renewcommand{\thefootnote}{\arabic{footnote}}
\setcounter{footnote}{0}


\begin{abstract}
For $p\equiv 1\pmod 4$, we prove the formula (conjectured by R. Chapman) for
the determinant of the
$\frac{p+1}{2}\times\frac{p+1}{2}$ matrix $C=(C_{ij})$ with
$C_{ij}=\genfrac{(}{)}{}{1}{j-i}{p}$.
\end{abstract}

\section{Introduction}
Let $p$ be a prime and $\genfrac{(}{)}{}{1}{\cdot}{p}$ denote the
Legendre symbol. Let us set $n=\frac{p-1}{2}$ and consider the
following $(n+1)\times (n+1)$ matrix $C$:
$$
  C_{ij}=\genfrac{(}{)}{}{}{j-i}{p}, \qquad 0\le i,j\le n.
$$
Here and it what follows it is more natural to enumerate rows and
columns starting from 0. R.~Chapman \cite{C3} raised the problem of
evaluating $\det C$; for motivation and related determinants see also
\cite{C1}, \cite{C2}. In particular, Chapman conjectured (see also
\cite[Problem 10]{BCC})  that $\det C$ is always 1 when $p\equiv
3\pmod 4$ and had a conjectural expression for $\det C$ in terms of
the fundamental unit and class number of $\mathbb{Q}(\sqrt{p})$ for
$p\equiv 1\pmod 4$. The sequence $\det C$ for primes $p\equiv 1\pmod
4$ also appears as sequence A179073 in the On-line Encyclopedia of
Integer Sequences \cite{OEIS}.

Chapman's conjecture for $p\equiv 3\pmod 4$ was settled affirmatively
in \cite{V1}. The aim of this paper is to apply methods developed in
\cite{V1} and to evaluate $\det C$ for $p\equiv 1\pmod 4$.

Let $\mathcal{O}$ be the ring of integers of $\mathbb{Q}(\sqrt{p})$.
Let $\varepsilon$ be the fundamental unit in $\mathcal{O}$ and
$h=h(p)$ be the class number.

\begin{thm}
  \label{thm:det1}
Let
\begin{equation}
 \label{eq:defa}
  a+b\sqrt{p} =\left\{
   \begin{array}{l}
      \varepsilon^h, \text{ if } p\equiv 1 \pmod 8, \\
      \varepsilon^{3h}, \text{ if } p\equiv 5 \pmod 8,
   \end{array}
   \right.
\end{equation}
Then $\det C = -a$.
\end{thm}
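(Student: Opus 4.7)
The plan is to follow and adapt the approach of \cite{V1}. Since $p\equiv 1\pmod 4$ one has $\left(\tfrac{-1}{p}\right)=1$, so the matrix $C$ is symmetric with zero diagonal, and its order $n+1=(p+1)/2$ is odd. Consequently the Pfaffian shortcut that produced $\det C=1$ in the case $p\equiv 3\pmod 4$ is no longer available, and one must bring in cyclotomic information directly.

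The first step is to replace the combinatorial entries of $C$ with their Gauss-sum expansion. Using $\chi(m)=\tfrac{1}{\sqrt p}\sum_{k=1}^{p-1}\chi(k)\zeta^{mk}$ (with $\zeta=e^{2\pi i/p}$ and $g(1)=+\sqrt p$ by Gauss's theorem), one writes
\[
 C=\tfrac{1}{\sqrt p}\,V\,D\,V^{*},
\]
where $V=(\zeta^{ik})_{0\le i\le n,\,1\le k\le p-1}$ and $D=\mathrm{diag}(\chi(k))$. The symmetry $\chi(-k)=\chi(k)$ pairs the columns $k$ and $p-k$ of $V$; after an explicit change of basis on the column side and, if necessary, augmenting $C$ by one extra row and column to correct the parity of $n+1$, this collapses the rectangular factorization into a square Vandermonde-type determinant which can be evaluated by the Cauchy determinant formula. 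The outcome is an explicit closed form for $\det C$ as a signed product of factors $1-\zeta^k$ raised to exponents $\chi(k)$, up to powers of $\sqrt p$ and explicit signs.

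The second step brings in Dirichlet's class number formula for the real quadratic field $\mathbb Q(\sqrt p)$; for $p\equiv 1\pmod 4$ it says, in a suitable normalization,
\[
 \varepsilon^{2h}=\pm\prod_{k=1}^{p-1}(1-\zeta^k)^{-\chi(k)}.
\]
Matching this with the cyclotomic expression produced in the first step identifies $\det C$ with the rational integer $\tfrac12(\varepsilon^{mh}+\bar\varepsilon^{mh})$ for an appropriate exponent $m$, i.e., with $\pm a$.

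The main technical obstacle is the case distinction in (\ref{eq:defa}) between $p\equiv 1$ and $p\equiv 5\pmod 8$. Its arithmetic source is the splitting behaviour of $2$ in $\mathcal O$: for $p\equiv 1\pmod 8$ the prime $2$ splits and $\varepsilon$ itself already lies in $\mathbb Z[\sqrt p]$, so $m=1$ works; for $p\equiv 5\pmod 8$ the prime $2$ is inert, so $\varepsilon\in\mathcal O\setminus\mathbb Z[\sqrt p]$, while $\varepsilon^3\in\mathbb Z[\sqrt p]$, forcing $m=3$. Pinning down the sign ``$-a$'' then requires careful bookkeeping of the Gauss-sum sign, of the sign arising from the Vandermonde column reordering in the Cauchy formula, and of the sign coming from $N(\varepsilon)=-1$ (which holds for every prime $p\equiv 1\pmod 4$). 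The bulk of the genuine work in the proof lies in these sign and integrality verifications.
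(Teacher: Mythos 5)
Your high-level plan (factor $C$ through roots of unity, evaluate a Cauchy-type determinant, and match the result against Dirichlet's class number formula) is the right one, and it is indeed the strategy of the paper. But the two steps that carry all the weight are missing, and one of them would fail as described. First, the factorization $C=\tfrac{1}{\sqrt p}VDV^{*}$ with $V$ of size $(n+1)\times(p-1)$ is rectangular, so its determinant is a Cauchy--Binet sum over $\binom{p-1}{n+1}$ column choices; pairing the columns $k$ and $p-k$ does not make this ``collapse'' into a single square Vandermonde or Cauchy determinant, and no mechanism is offered for why it should. The paper replaces this with a genuine square identity, $C=\tau_p(2)\,\zeta^{(p-1)/4}\,VDUDV$ with $V_{ij}=\zeta^{2ij}$ of size $(n+1)\times(n+1)$ and a Cauchy-like middle factor $U_{ij}$, proved by Lagrange interpolation; nothing in your sketch substitutes for this identity, which is the real content of the first half of the argument.

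Second, and more structurally: the target value $a$ satisfies $2a=\varepsilon^{mh}-\varepsilon^{-mh}$ (using $N(\varepsilon^{h})=-1$), i.e.\ it is a \emph{difference of two} unit powers. A single ``signed product of factors $1-\zeta^{k}$ raised to exponents $\chi(k)$'' is, by the class number formula, essentially $\varepsilon^{\pm 2h}$ times radicals --- an irrational unit, never a rational integer like $a$. So the closed form you predict in step one has the wrong shape to ever match $-a$. In the paper the required difference of two products arises from the \emph{bordered} Cauchy matrix $W_m$ (zero corner, border of ones, obtained as $GUG$): its determinant is computed by the rank-one perturbation trick $f(t)=\det(tJ+M)$, which is linear in $t$, giving $\det M=(f(1)+f(-1))/2$ and hence the combination $\prod(1+x_i)\prod(1+y_j)\mp\prod(1-x_i)\prod(1-y_j)$. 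That combination is exactly what Dirichlet's formula converts into $\varepsilon^{(2-(2/p))h}-\varepsilon^{-(2-(2/p))h}=2a$ (this exponent, not the splitting of $2$ in $\mathcal O$, is the actual source of the $p\equiv 1$ versus $p\equiv 5\pmod 8$ dichotomy). Finally, the concluding step --- showing the leftover cyclotomic factor is a square of a unit of $\mathbb Z[\zeta]$ lying in $\mathbb Q$, hence $\pm1$, and then $+1$ because $\mathbb Q(\zeta)$ contains no primitive fourth root of unity --- is nontrivial bookkeeping with the prime $(1-\zeta)$ that your proposal only alludes to. As it stands the proposal is a plausible road map, not a proof.
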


Our proof is divided into three steps. First, we decompose $C$ into a
product of several matrices; see Theorem~\ref{lem:decomp1} below.
This part resembles a similar step in the evaluation of $\det C$ for
$p\equiv 3\pmod 4$; see \cite{V1} for details. Second, we find
general expressions for certain parametric Cauchy-type determinants
and reduce our problem to a particular case of that calculation.
Finally, we relate the obtained determinants to Dirichlet's  class
number formula for real quadratic fields \cite[Ch.~5, \S4]{BS}, which
involves both class number and the fundamental unit.

Looking at the numerical data, W.~Zudilin and J.~Sondow conjectured
\cite[Entry A179073]{OEIS} that $\det C$ is always negative and even.
This easily follows from our Theorem~1.

\begin{cor}
 \label{cor:a_neg_even}
For $p\equiv 1\pmod 4$, $\det C$ is negative and even.
\end{cor}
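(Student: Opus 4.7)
By Theorem~\ref{thm:det1} we have $\det C=-a$, where $a+b\sqrt{p}=\varepsilon^{k}$ with $k=h$ or $k=3h$ according as $p\equiv 1$ or $5\pmod 8$. The corollary thus reduces to showing that the integer $a$ is positive and even.

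For positivity I would argue as follows. Normalise the fundamental unit so that $\varepsilon>1$; its Galois conjugate then satisfies $\bar\varepsilon=\pm\varepsilon^{-1}$, and in particular $|\bar\varepsilon|<1$. Applying the nontrivial automorphism of $\mathbb{Q}(\sqrt{p})$ to the relation $\varepsilon^{k}=a+b\sqrt{p}$ gives $\bar\varepsilon^{k}=a-b\sqrt{p}$, and adding,
\[
  2a=\varepsilon^{k}+\bar\varepsilon^{k}\ge \varepsilon^{k}-\varepsilon^{-k}>0,
\]
since $k\ge 1$ and $\varepsilon>1$. Hence $a>0$, so $\det C=-a<0$.

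For the parity I plan to invoke two classical facts about the real quadratic field $\mathbb{Q}(\sqrt{p})$ attached to a prime $p\equiv 1\pmod 4$: the class number $h$ is odd (by genus theory, since the discriminant has a single prime divisor), and $N(\varepsilon)=-1$ (equivalently, the continued fraction expansion of $\sqrt{p}$ has odd period). Granted these, the exponent $k$ in \eqref{eq:defa} is odd in both cases, whence
\[
  a^{2}-pb^{2}=N(\varepsilon^{k})=(-1)^{k}=-1.
\]
Reducing modulo $4$ and using $p\equiv 1\pmod 4$ yields $a^{2}-b^{2}\equiv -1\pmod 4$. Since squares modulo $4$ are $0$ or $1$, this forces $a$ to be even (and $b$ to be odd). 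Combined with $a>0$, the corollary follows.

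The only ingredients beyond Theorem~\ref{thm:det1} are the two classical facts on $h$ and $N(\varepsilon)$ cited above; the remainder is a one-line sign estimate and a mod-$4$ parity check, so no serious obstacle is anticipated.
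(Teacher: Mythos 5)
Your proof is correct, and the parity argument is genuinely different from (and slicker than) the paper's. For positivity both arguments are the same one-liner from $\varepsilon>1$. For evenness, the paper works with $\varepsilon^{h}=(\alpha+\beta\sqrt{p})/2$, $\alpha,\beta\in\mathbb{Z}$, uses the norm equation $\alpha^2-p\beta^2=-4$, and does a case analysis modulo $16$ (for $p\equiv 5\pmod 8$ it must additionally expand the cube to extract $a=(\alpha^3+3\alpha\beta^2p)/8$); this is forced because in $\mathcal{O}=\mathbb{Z}[(1+\sqrt p)/2]$ the coordinates $a,b$ are a priori only half-integers, so one cannot immediately reduce $a^2-pb^2=-1$ modulo $4$. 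Your route sidesteps the mod-$16$ computation entirely by reducing modulo $4$, but it therefore \emph{needs} the fact that $a$ and $b$ are rational integers, which you assert ("the integer $a$") without justification. The gap is easily filled: by Theorem~\ref{thm:det1}, $a=-\det C\in\mathbb{Z}$, and then $b\in\mathbb{Z}$ as well since $a+b\sqrt p\in\mathcal{O}$ forces $2b\in\mathbb{Z}$ with $2a\equiv 2b\pmod 2$. With that sentence added, your mod-$4$ argument ($a^2-b^2\equiv -1\pmod 4$ forces $a$ even, $b$ odd) is complete and uses exactly the same classical inputs as the paper ($h$ odd, $N(\varepsilon)=-1$, both cited there to Hasse and Borevich--Shafarevich). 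What you buy is a shorter, more transparent congruence; what the paper's version buys is independence from Theorem~\ref{thm:det1}'s integrality conclusion, since it establishes evenness of $a$ directly from the unit.
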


\section{Matrix decomposition}

Set $n=(p-1)/2$. Let $\zeta$ be the primitive $p$-th root of unity
with $\arg \zeta = 2\pi/p$. We also fix  its square root
$\zeta^{1/2}$ such that $\arg \zeta^{1/2} = \pi/p$.

Let us consider the following three matrices $U$, $V$, and $D$:
\begin{eqnarray}
  \label{eq:defU}
    U_{ij} &=& \frac{
    \genfrac{(}{)}{}{1}{i}{p} \zeta^{-j-2i} + \genfrac{(}{)}{}{1}{j}{p} \zeta^{-2j-i}
    }
    {\zeta^{-i-j}+\genfrac{(}{)}{}{1}{i}{p} \genfrac{(}{)}{}{1}{j}{p} },
    \quad
    0\le i,j\le n,
    \\
  \nonumber
  V_{ij} &=& \zeta^{2ij},
      \quad
    0\le i,j\le n,
   \\
  \label{eq:defD}
    D_{ii} &=& \prod_{\stackrel{0\le k \le n}{k\neq i}} \frac{1}{\zeta^{2i}-\zeta^{2k}},
      \quad
    0\le i\le n,
  \\
  \label{eq:defD2}
    D_{ij} &=&0,  \quad i\neq j.
\end{eqnarray}
In particular, $V$ is a Vandermonde-type matrix and $D$ is diagonal.
If we set
$$
 g(x)=\prod_{0\le k \le n} (x-\zeta^{2k}),
$$
then the diagonal entries of $D$ can be represented in an alternative
way as
 \begin{equation}
  \label{eq:altD}
  D_{ii}=\frac{1}{g'(\zeta^{2i})}.
 \end{equation}
Finally, let $\tau_p(r)$ be the Gauss sum
 $$
  \tau_p(r) =\sum_{k=1}^{p-1} \genfrac{(}{)}{}{}{kr}{p} \zeta^{k}
    =\sum_{k=1}^{p-1} \genfrac{(}{)}{}{}{k}{p} \zeta^{kr}.
 $$

\begin{thm}
 \label{lem:decomp1} For any prime $p$ such that $p\equiv 1\pmod 4$,
 we have
\begin{equation}
 \label{eq:decomp1}
  C =
  \tau_p(2) \zeta^{(p-1)/4} \cdot VDUDV =
  \genfrac{(}{)}{}{}{2}{p} \sqrt{p} \zeta^{(p-1)/4} \cdot VDUDV.
\end{equation}
\end{thm}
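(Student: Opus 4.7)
The plan is to rewrite $C$ via Gauss-sum inversion as a bilinear form in the Vandermonde matrix $V$ and its complex conjugate $\bar V$, use Lagrange interpolation to express $\bar V$ in terms of $V$, and thereby reduce the claimed factorization to an entrywise cyclotomic identity.

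\emph{Step 1.} Gauss-sum inversion $\left(\frac{m}{p}\right)=\tau_p(m)/\sqrt{p}$ (which uses $\tau_p(1)=\sqrt{p}$ for $p\equiv 1\pmod 4$) gives
$$
 C_{ij}=\frac{1}{\sqrt{p}}\sum_{k=1}^{p-1}\left(\frac{k}{p}\right)\zeta^{k(j-i)}.
$$
Substituting $k\mapsto 2k$ (a bijection mod $p$) and then pairing $k\leftrightarrow p-k$ (legitimate because $\left(\frac{-1}{p}\right)=1$) yields
$$
 C_{ij}=\frac{\left(\frac{2}{p}\right)}{\sqrt{p}}\sum_{m=1}^{n}\left(\frac{m}{p}\right)\bigl(\omega_m^{j-i}+\omega_m^{i-j}\bigr),\qquad \omega_m=\zeta^{2m}.
$$
In matrix form this is $C=\left(\tfrac{2}{p}\right)p^{-1/2}\bigl(VE\bar V+\bar V EV\bigr)$, where $E$ is the diagonal matrix with $E_{aa}=\left(\frac{a}{p}\right)$ and $\bar V_{ij}=\zeta^{-2ij}$.

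\emph{Step 2.} For $j\ge 1$ the point $\bar\omega_j:=\zeta^{-2j}$ is an ``odd-exponent'' $p$-th root of unity and lies outside $\{\omega_0,\ldots,\omega_n\}$. The Lagrange interpolation identity
$$
 x^i=\sum_{a=0}^{n}\frac{\omega_a^i}{g'(\omega_a)}\cdot\frac{g(x)}{x-\omega_a}\qquad (0\le i\le n),
$$
evaluated at $x=\bar\omega_j$, gives $\bar V=VT$, where $T_{aj}=g(\bar\omega_j)/[(\bar\omega_j-\omega_a)g'(\omega_a)]$ for $j\ge 1$ and $T_{a0}=\delta_{a0}$. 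Since $V$ and $\bar V$ are symmetric, transposing $\bar V=VT$ gives $\bar V=T^T V$, whence $VTV^{-1}=T^T$. A short calculation then shows $VE\bar V+\bar V EV=V(ET^T+TE)V$, and the claimed factorization becomes equivalent to the matrix identity
$$
 ET^T+TE=p\,\zeta^{(p-1)/4}\,DUD.
$$

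\emph{Step 3.} Written entrywise, this says $\left(\frac{a}{p}\right)T_{ca}+\left(\frac{c}{p}\right)T_{ac}=p\,\zeta^{(p-1)/4}U_{ac}/[g'(\omega_a)g'(\omega_c)]$. Setting $h(x)=\prod_{a=1}^{n}(x-\bar\omega_a)$, the factorization $g(x)h(x)=x^p-1$ (and its derivative) yields $g'(\omega_a)h(\omega_a)=p\omega_a^{-1}$ and $g(\bar\omega_a)h'(\bar\omega_a)=p\omega_a$, hence $g'(\omega_a)g(\bar\omega_a)=p^2/[h(\omega_a)h'(\bar\omega_a)]$. Substituting the explicit $T$ and clearing denominators, the identity reduces to the purely cyclotomic statement
$$
 \frac{p\left(\frac{a}{p}\right)}{h(\omega_a)h'(\bar\omega_a)(\bar\omega_a-\omega_c)}+\frac{p\left(\frac{c}{p}\right)}{h(\omega_c)h'(\bar\omega_c)(\bar\omega_c-\omega_a)}=\zeta^{(p-1)/4}\,\frac{\left(\frac{a}{p}\right)\zeta^{-a}+\left(\frac{c}{p}\right)\zeta^{-c}}{1+\left(\frac{a}{p}\right)\left(\frac{c}{p}\right)\zeta^{a+c}}.
$$
The main obstacle is the verification of this algebraic identity. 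Using the sine representation $\omega_a-\bar\omega_{a'}=2i\sin(2\pi(a+a')/p)\zeta^{a-a'}$, the classical product $\prod_{k=1}^{p-1}\sin(\pi k/p)=p/2^{p-1}$, and Gauss's sign determination of $\tau_p(1)$, I expect $p\left(\frac{a}{p}\right)/[h(\omega_a)h'(\bar\omega_a)]$ to simplify to $\zeta^{(p-1)/4}\zeta^{-a}$ times an explicit rational factor that, upon symmetrization in $a$ and $c$, conspires with $(\bar\omega_a-\omega_c)^{-1}$ to produce the denominator $1+\left(\frac{a}{p}\right)\left(\frac{c}{p}\right)\zeta^{a+c}$ of $U_{ac}$. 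The boundary cases $a=0$ or $c=0$ are handled separately, since $T_{a0}=\delta_{a0}$ and $\left(\frac{0}{p}\right)=0$ make one or both terms of $ET^T+TE$ collapse.
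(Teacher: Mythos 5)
Your Steps 1 and 2 are correct, and your reduction is in essence the paper's argument run in reverse: the paper expands $VDUDV$ entrywise and uses Lagrange interpolation to collapse the double sum into a single Gauss sum, whereas you conjugate $C$ by $V^{-1}$ (via $\bar V=VT=T^{T}V$) and reduce the theorem to the entrywise identity $ET^{T}+TE=p\,\zeta^{(p-1)/4}DUD$. The bookkeeping here is right, including the boundary column $T_{a0}=\delta_{a0}$ and the fact that $\bar\omega_j$ avoids the interpolation nodes for $j\ge 1$; the boundary cases $a=0$ or $c=0$ of the target identity do check out against $U_{0c}=\bigl(\tfrac{c}{p}\bigr)\zeta^{-c}$.

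The genuine gap is Step 3: the two-variable cyclotomic identity now carries the entire content of the theorem, and you do not prove it --- you only state that you ``expect'' the pieces to conspire, and the toolkit you propose (sine products, $\prod_k\sin(\pi k/p)=p/2^{p-1}$, the sign of $\tau_p(1)$) is not the one that closes it, which suggests the computation was not carried through. What is actually needed is elementary and is exactly the paper's core calculation transplanted: rearranging the product defining $g'(\zeta^{2a})g(\zeta^{-2a})$ and recognizing $(x^p-1)'$ at $x=\zeta^{2a}$ gives $g'(\zeta^{2a})g(\zeta^{-2a})=-p\,\zeta^{-3a}\zeta^{(p-1)/4}(\zeta^{2a}-1)$, whence by your relation $h(\omega_a)h'(\bar\omega_a)=p^2/(g'(\omega_a)g(\bar\omega_a))$ one gets $p\bigl(\tfrac{a}{p}\bigr)/(h(\omega_a)h'(\bar\omega_a))=\zeta^{(p-1)/4}\bigl(\tfrac{a}{p}\bigr)(\zeta^{-3a}-\zeta^{-a})$. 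Your identity then reduces to
\begin{equation*}
\frac{\bigl(\tfrac{a}{p}\bigr)(\zeta^{-3a}-\zeta^{-a})}{\zeta^{-2a}-\zeta^{2c}}
+\frac{\bigl(\tfrac{c}{p}\bigr)(\zeta^{-3c}-\zeta^{-c})}{\zeta^{-2c}-\zeta^{2a}}
=\frac{\bigl(\tfrac{a}{p}\bigr)\zeta^{-a}+\bigl(\tfrac{c}{p}\bigr)\zeta^{-c}}{1+\bigl(\tfrac{a}{p}\bigr)\bigl(\tfrac{c}{p}\bigr)\zeta^{a+c}},
\end{equation*}
and after putting the left side over the common denominator $1-\zeta^{2(a+c)}$ the factorization $1-\zeta^{2(a+c)}=\bigl(1-\bigl(\tfrac{a}{p}\bigr)\bigl(\tfrac{c}{p}\bigr)\zeta^{a+c}\bigr)\bigl(1+\bigl(\tfrac{a}{p}\bigr)\bigl(\tfrac{c}{p}\bigr)\zeta^{a+c}\bigr)$ --- precisely the trick the paper applies to $s_2$ --- finishes the verification. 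So your framework would yield a complete and essentially equivalent proof, but as written the decisive step is asserted rather than proved.
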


\begin{rem} For $p\equiv 3\pmod 4$ we have (see \cite{V1}) a similar expression
$$
C = -\tau_p(2) \zeta^{-(p+1)/4} \cdot VD\tilde{U}DV,
$$
where
 \begin{equation}
 \label{eq:deftildeU}
\tilde{U}_{ij}= \frac{
    \genfrac{(}{)}{}{1}{i}{p} \zeta^{-j-2i} - \genfrac{(}{)}{}{1}{j}{p} \zeta^{-2j-i}
    }
    {\zeta^{-i-j}-\genfrac{(}{)}{}{1}{i}{p} \genfrac{(}{)}{}{1}{j}{p} }.
 \end{equation}
Both (\ref{eq:defU}) for $p \equiv 1 \pmod{4}$ and
(\ref{eq:deftildeU}) for $p \equiv 3 \pmod{4}$ can be unified as
 $$
 U_{ij}= \frac{
    \genfrac{(}{)}{}{1}{i}{p} \zeta^{-j-2i} + \genfrac{(}{)}{}{1}{-j}{p} \zeta^{-2j-i}
    }
    {\zeta^{-i-j}+\genfrac{(}{)}{}{1}{i}{p} \genfrac{(}{)}{}{1}{-j}{p} }.
 $$
The unified formula for the decomposition becomes   $C =
\genfrac{(}{)}{}{1}{-1}{p} \tau_p(2) \zeta^{(p^2-1)/4} \cdot VDUDV$.
\end{rem}

\begin{proof}[Proof of Theorem~\ref{lem:decomp1}]
Let $B=VDUDV$. We have,
$$
 B_{ij} = \sum_{k=0}^n \sum_{r=0}^n \zeta^{2ki+2rj} \cdot
          \frac{1}{g'(\zeta^{2k})} \cdot
           \frac{1}{g'(\zeta^{2r})} \cdot
           \frac{ \genfrac{(}{)}{}{1}{k}{p}\zeta^{-r-2k} +
           \genfrac{(}{)}{}{1}{r}{p}\zeta^{-2r-k}}
           {\zeta^{-k-r}+\genfrac{(}{)}{}{1}{k}{p}\genfrac{(}{)}{}{1}{r}{p} }.
$$
The term corresponding to $k=r=0$ in the above sum vanishes. The
remaining terms can be arranged into three groups depending on
whether $k=0$, or $r=0$, or $k\neq 0$, $r\neq 0$. More precisely,
$$
  B_{ij}= s_0+s_1+s_2,
$$
where
\begin{eqnarray*}
 s_0 &=&  \frac{1}{g'(1)} \sum_{r=1}^n
          \genfrac{(}{)}{}{}{r}{p}\cdot
           \frac{\zeta^{(2j-1)r}}{g'(\zeta^{2r})},
 \\
 s_1 &=&  \frac{1}{g'(1)} \sum_{k=1}^n
          \genfrac{(}{)}{}{}{k}{p}\cdot
           \frac{\zeta^{(2i-1)k}}{g'(\zeta^{2k})},
 \\
 s_2 &=& \sum_{k=1}^n \sum_{r=1}^n \zeta^{2ki+2rj} \cdot
          \frac{1}{g'(\zeta^{2k})} \cdot
           \frac{1}{g'(\zeta^{2r})} \cdot
           \frac{ \genfrac{(}{)}{}{1}{k}{p}\zeta^{-r-2k} +
           \genfrac{(}{)}{}{1}{r}{p}\zeta^{-2r-k}}
           {\zeta^{-k-r}+\genfrac{(}{)}{}{1}{k}{p}\genfrac{(}{)}{}{1}{r}{p} }.
\end{eqnarray*}
Notice that $\zeta^{-2k-2r}\neq 1$ and
$(\genfrac{(}{)}{}{1}{k}{p}\genfrac{(}{)}{}{1}{r}{p})^2=1$ for $1\le
k,r\le n$. Applying the identity
 $$
  \left(\zeta^a-\genfrac{(}{)}{}{}{k}{p}\genfrac{(}{)}{}{}{r}{p}\right)
  \left(\zeta^a+\genfrac{(}{)}{}{}{k}{p}\genfrac{(}{)}{}{}{r}{p}\right)
  = \zeta^{2a}-1,
 $$
which is valid for $k$, $r$ such that $p\nmid k$, $p\nmid r$, we have
that
\begin{eqnarray*}
 s_2 \!\!&=&\!\! \sum_{k=1}^n \sum_{r=1}^n \frac{\zeta^{2ki+2rj}}
          {g'(\zeta^{2k})g'(\zeta^{2r})} \cdot
           \frac{
            \left(
             \zeta^{-k-r} - \genfrac{(}{)}{}{1}{k}{p}\genfrac{(}{)}{}{1}{r}{p}
            \right)
            \left( \genfrac{(}{)}{}{1}{k}{p}\zeta^{-r-2k} +
           \genfrac{(}{)}{}{1}{r}{p}\zeta^{-2r-k}\right)}
           {\zeta^{-2k-2r}-1}
   \\
    &=&\!\! \sum_{k=1}^n \sum_{r=1}^n \frac{\zeta^{2ki+2rj}}
          {g'(\zeta^{2k})g'(\zeta^{2r})} \cdot
           \frac{
           \genfrac{(}{)}{}{1}{k}{p} (\zeta^{-2r-3k}
           - \zeta^{-2r-k})
           +\genfrac{(}{)}{}{1}{r}{p} (\zeta^{-3r-2k}
           - \zeta^{-r-2k})
           }
           {\zeta^{-2k-2r}-1}
   \\
   &=&\!\! s_3+s_4,
\end{eqnarray*}
where
\begin{eqnarray*}
  s_3 &=& \sum_{k=1}^n \sum_{r=1}^n
     \genfrac{(}{)}{}{}{k}{p}
     \frac{\zeta^{2ki+2rj}}{g'(\zeta^{2k})g'(\zeta^{2r})} \cdot
     \frac{\zeta^{-2r-3k}-\zeta^{-2r-k}}{\zeta^{-2k-2r}-1}
   \\
   &=& \sum_{k=1}^n \sum_{r=1}^n
     \genfrac{(}{)}{}{}{k}{p}
     \frac{\zeta^{2ki+2rj}}{g'(\zeta^{2k})g'(\zeta^{2r})} \cdot
     \frac{\zeta^{-3k}-\zeta^{-k}}{\zeta^{-2k}-\zeta^{2r}},
   \\
  s_4 &=& \sum_{k=1}^n \sum_{r=1}^n
     \genfrac{(}{)}{}{}{r}{p}
     \frac{\zeta^{2ki+2rj}}{g'(\zeta^{2k})g'(\zeta^{2r})} \cdot
     \frac{\zeta^{-3r-2k}-\zeta^{-r-2k}}{\zeta^{-2k-2r}-1}
   \\
     &=& \sum_{k=1}^n \sum_{r=1}^n
     \genfrac{(}{)}{}{}{r}{p}
     \frac{\zeta^{2ki+2rj}}{g'(\zeta^{2k})g'(\zeta^{2r})} \cdot
     \frac{\zeta^{-3r}-\zeta^{-r}}{\zeta^{-2r}-\zeta^{2k}}.
\end{eqnarray*}

By the Lagrange interpolation formula, for any polynomial $f$ of
degree less than $n+1$, we have that
$$
  \frac{f(x)}{g(x)} =
  \sum_{r=0}^{n} \frac{1}{g'(\zeta^{2r})} \cdot \frac{f(\zeta^{2r})}{x-\zeta^{2r}}.
$$
Therefore, for any $x$ different from the roots of $g$,
\begin{equation}
  \label{eq:interp-alt}
  \sum_{r=1}^{n} \frac{1}{g'(\zeta^{2r})} \cdot \frac{f(\zeta^{2r})}{x-\zeta^{2r}}
  =
  \frac{f(x)}{g(x)} -
  \frac{1}{g'(1)} \cdot \frac{f(1)}{x-1}.
\end{equation}
If $k$ runs through $1$,\dots, $n$, then $-2k \pmod p$ runs through
odd integers $p-2$, $p-4$,\dots, $3$, $1$. In particular,
$\zeta^{-2k}$ is not a root of $g$. To evaluate $s_3$ we make
summation with respect to $r$ first and use (\ref{eq:interp-alt}) for
$f(x)=x^{j}$ substituting $x=\zeta^{-2k}$:
\begin{eqnarray*}
 s_3 &=& \sum_{k=1}^n
         \genfrac{(}{)}{}{}{k}{p} \frac{\zeta^{2ki}}{g'(\zeta^{2k})}
         \left(\zeta^{-3k}-\zeta^{-k}\right)
          \biggl(
     \sum_{r=1}^n
     \frac{1}{g'(\zeta^{2r})} \cdot \frac{\zeta^{2rj}}{\zeta^{-2k}-\zeta^{2r}}
      \biggr)
 \\
   &=& \sum_{k=1}^n
         \genfrac{(}{)}{}{}{k}{p} \frac{\zeta^{2ki}}{g'(\zeta^{2k})}
         \left(\zeta^{-3k}-\zeta^{-k}\right)
          \left(
      \frac{\zeta^{-2kj}}{g(\zeta^{-2k})}
       -\frac{1}{g'(1)}\cdot\frac{1}{\zeta^{-2k}-1}
      \right)
  \\
    &=& \sum_{k=1}^n
         \genfrac{(}{)}{}{}{k}{p}
         \frac{\zeta^{2k(i-j)}(\zeta^{-3k}-\zeta^{-k})}
         {g'(\zeta^{2k}) g(\zeta^{-2k})}
      - \frac{1}{g'(1)} \sum_{k=1}^n \genfrac{(}{)}{}{}{k}{p}
      \frac{\zeta^{(2i-1)k}}{g'(\zeta^{2k})}
  \\
    &=& \sum_{k=1}^n
         \genfrac{(}{)}{}{}{k}{p}
         \frac{\zeta^{2k(i-j)}(\zeta^{-3k}-\zeta^{-k})}
         {g'(\zeta^{2k}) g(\zeta^{-2k})}
      - s_1.
\end{eqnarray*}
To evaluate $s_4$ we argue in the same way but now we sum with
respect to $k$ first and substitute $x=\zeta^{-2r}$ into
(\ref{eq:interp-alt}) for $f(x)=x^i$. As a result,
 $$
 s_4 =  \sum_{r=1}^n
         \genfrac{(}{)}{}{}{r}{p}
         \frac{\zeta^{2r(j-i)}(\zeta^{-3r}-\zeta^{-r})}
         {g'(\zeta^{2r}) g(\zeta^{-2r})}
      - s_0.
 $$
Therefore,
\begin{eqnarray*}
 B_{ij} &=& \sum_{k=1}^n
         \genfrac{(}{)}{}{}{k}{p}
         \frac{\zeta^{2k(i-j)}(\zeta^{-3k}-\zeta^{-k})}
         {g'(\zeta^{2k}) g(\zeta^{-2k})}
       + \sum_{r=1}^n
         \genfrac{(}{)}{}{}{r}{p}
         \frac{\zeta^{2r(j-i)}(\zeta^{-3r}-\zeta^{-r})}
         {g'(\zeta^{2r}) g(\zeta^{-2r})}
  \\
      &=&
      \sum_{k=1}^n
         \genfrac{(}{)}{}{}{k}{p}
         \frac{(\zeta^{2k(i-j)}+\zeta^{-2k(i-j)}) (\zeta^{-3k}-\zeta^{-k})}
         {g'(\zeta^{2k}) g(\zeta^{-2k})}.
\end{eqnarray*}
Now we evaluate the denominator of each term. Recall that
$n=(p-1)/2$, so $(-1)^{n+1}=(-1)^{(p+1)/2}=-1$ if $p\equiv 1\pmod 4$.
We have
\begin{eqnarray*}
&& g'(\zeta^{2k}) g(\zeta^{-2k}) =
    \prod_{\stackrel{0\le t \le n}{t\neq k}} (\zeta^{2k}-\zeta^{2t})
    \prod_{0\le t \le n} (\zeta^{-2k}-\zeta^{2t})
    \\
   &&\qquad=
    (-1)^{n+1} (\zeta^{-2k})^{n+1} (\zeta^2)^{(0+1+\cdots+n)}
    \prod_{\stackrel{0\le t \le n}{t\neq k}} (\zeta^{2k}-\zeta^{2t})
    \prod_{0\le t \le n} (\zeta^{2k}-\zeta^{-2t})
    \\
   &&\qquad= -\zeta^{-k(p+1)} \zeta^{(p^2-1)/4} (\zeta^{2k}-1)
       \prod_{\stackrel{0\le t \le p-1}{t\neq k}} (\zeta^{2k}-\zeta^{2t})
    \\
   &&\qquad=
      -\zeta^{-k} \zeta^{(p-1)/4} (\zeta^{2k}-1) ((x^p-1)')_{\vert x=\zeta^{2k}}
    \\
   &&\qquad= -p \zeta^{-3k} \zeta^{(p-1)/4} (\zeta^{2k}-1).
\end{eqnarray*}
Using this together with the fact that $\genfrac{(}{)}{}{}{\cdot}{p}$
is an even character for $p\equiv 1\pmod 4$, we continue as follows:
 \begin{eqnarray*}
    B_{ij}      &=&
       \frac{\zeta^{-(p-1)/4}}{p} \sum_{k=1}^n
         \genfrac{(}{)}{}{}{k}{p}
         (\zeta^{2k(i-j)}+\zeta^{-2k(i-j)})
         \\
     &=&
       \frac{\zeta^{-(p-1)/4}}{p} \sum_{k=1}^n
         \left( \genfrac{(}{)}{}{}{k}{p}
         \zeta^{2k(i-j)} + \genfrac{(}{)}{}{}{-k}{p} \zeta^{-2k(i-j)}\right)
         \\
     &=&
      \frac{\zeta^{-(p-1)/4}}{p} \sum_{k=1}^{p-1}
         \genfrac{(}{)}{}{}{k}{p}
         \zeta^{2k(i-j)}
         \\
     &=&
     \genfrac{(}{)}{}{}{i-j}{p}
     \frac{\zeta^{-(p-1)/4}}{p} \sum_{r=1}^{p-1}
         \genfrac{(}{)}{}{}{r}{p}
         \zeta^{2r}
       =      \genfrac{(}{)}{}{}{j-i}{p}
     \frac{\zeta^{-(p-1)/4} \tau_p(2) }{p}.
 \end{eqnarray*}
Since $\tau_p(2)=\genfrac{(}{)}{}{}{2}{p} \sqrt{p}$ for $p\equiv
1\pmod 4$ (e.g., see \cite[Ch. 6]{IR}), we conclude that
 $$
 \zeta^{(p-1)/4} \tau_p(2) B_{ij}=
 \genfrac{(}{)}{}{}{2}{p} \sqrt{p} \zeta^{(p-1)/4} B_{ij}
 = \genfrac{(}{)}{}{}{j-i}{p} =C_{ij},
 $$
which completes the proof.
\end{proof}

\section{Determinants of Cauchy-like matrices}
The following identity is due to Cauchy \cite{Cauchy}:
 $$
   \det \left( \frac{1}{u_i+v_j} \right)_{i,j=0,\dots, m-1} =
   \prod_{0\le i<j\le m-1} \bigl((u_i-u_j) (v_i-v_j)\bigr)
   \prod_{0\le i,j\le m-1} (u_i+v_j)^{-1}.
 $$
An alternative form of this identity can be obtained by replacing
$v_j$ with $v_j^{-1}$ and multiplying each column by $v_j^{-1}$:
\begin{eqnarray}
  \label{eq:CauchyId1}
  && \det \left( \frac{1}{1+u_iv_j} \right)_{i,j=0,\dots,m-1}
   \\
   \nonumber
  &&\qquad=
   \prod_{0\le i<j\le m-1} \bigl((u_i-u_j) (v_j-v_i)\bigr)
   \prod_{0\le i,j\le m-1} (1+u_iv_j)^{-1}.
\end{eqnarray}
For further connections of (\ref{eq:CauchyId1}) with representation
theory and symmetric functions, see \cite[Ch. 7]{St}.

In this section we evaluate determinants of  several parametric
matrices related to the second form of the Cauchy identity. Let
$\overrightarrow{u} =(u_0,\dots,u_{m-1})$, $\overrightarrow{v}
=(v_0,\dots,v_{m-1})$. Assume further that $1+u_iv_j\neq 0$,
$i,j=0\dots,m-1$. Let $M_{m}(\overrightarrow{u},\overrightarrow{v})$
be the $m\times m$ matrix with
  $$
    (M_m(\overrightarrow{u},\overrightarrow{v}))_{ij} = \frac{u_i+v_j}{1+u_iv_j}.
  $$

\begin{thm}
 \label{lem:detCauchy1}
We have\footnote{When the paper was ready, T.~Amdeberhan informed the
author that an equivalent statement had been proved in \cite{AZ} by a
different method.} $\det M_m(\overrightarrow{u},\overrightarrow{v})=$
  \begin{eqnarray*}
  &&=
          \frac{1}{2} \biggl(
             \prod_{i=0}^{m-1} (1+u_i) \prod_{j=0}^{m-1} (1+v_j)
             +(-1)^m \prod_{i=0}^{m-1} (1-u_i) \prod_{j=0}^{m-1} (1-v_j)
          \biggr)
          \\
     &&\qquad \times
           \prod_{0\le i<j \le m-1} (u_i-u_j)
                \prod_{0 \le i<j \le m-1} (v_j-v_i)
                \prod_{i=0}^{m-1} \prod_{j=0}^{m-1} (1+u_iv_j)^{-1}.
 \end{eqnarray*}
\end{thm}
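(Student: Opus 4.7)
My plan is to exploit the elementary identity
\[
  (1 \pm u_i)(1 \pm v_j) \;=\; (1 + u_iv_j) \pm (u_i + v_j),
\]
which, after division by $1 + u_iv_j$, translates into
\[
  \frac{(1+u_i)(1+v_j)}{1+u_iv_j} = 1 + (M_m)_{ij}, \qquad
  \frac{(1-u_i)(1-v_j)}{1+u_iv_j} = 1 - (M_m)_{ij}.
\]
Call the matrices on the left $P$ and $Q$; then $P = M_m + J$ and $Q = J - M_m$, where $J$ denotes the $m\times m$ all-ones matrix.

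The next observation is that $P_{ij} = (1+u_i)\cdot (1 + u_iv_j)^{-1} \cdot (1+v_j)$, so pulling the row factor $(1+u_i)$ and the column factor $(1+v_j)$ out of $P$ leaves the Cauchy matrix $K_{ij} = 1/(1+u_iv_j)$; an analogous factorization holds for $Q$. Hence by~(\ref{eq:CauchyId1}),
\[
  \det P = \prod_{i=0}^{m-1}(1+u_i) \prod_{j=0}^{m-1}(1+v_j) \cdot \det K, \qquad
  \det Q = \prod_{i=0}^{m-1}(1-u_i) \prod_{j=0}^{m-1}(1-v_j) \cdot \det K.
\]

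The key step is to relate $\det M_m$ to $\det P$ and $\det Q$. Since $J = \mathbf{1}\mathbf{1}^{T}$ has rank one, the matrix determinant lemma tells us that $t \mapsto \det(M_m + tJ)$ is an affine function of $t$. Averaging at $t = +1$ and $t = -1$ yields
\[
  \det M_m \;=\; \tfrac12\bigl(\det(M_m + J) + \det(M_m - J)\bigr) \;=\; \tfrac12\bigl(\det P + (-1)^m \det Q\bigr),
\]
the final equality using $\det(M_m - J) = \det(-Q) = (-1)^m \det Q$. Substituting the expressions for $\det P$, $\det Q$, and the Cauchy formula for $\det K$ produces exactly the formula in the statement.

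I do not foresee a genuine obstacle: once one spots the algebraic identity and the rank-one perturbation $J$, the rest is mechanical. The only points requiring care are the sign bookkeeping $\det(M_m - J) = (-1)^m \det Q$ and the fact that the affineness of $\det(M_m + tJ)$ needs no invertibility hypothesis on $M_m$, since $\det(A + t\mathbf{1}\mathbf{1}^T) = \det A + t\,\mathbf{1}^T \operatorname{adj}(A)\,\mathbf{1}$ for every square matrix $A$.
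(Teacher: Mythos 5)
Your proof is correct and follows essentially the same route as the paper: the paper likewise observes that $t\mapsto\det(tJ+M_m)$ is affine because $J$ has rank one, writes $\det M_m=\tfrac12\bigl(f(1)+f(-1)\bigr)$, and evaluates $f(\pm1)$ by factoring $\frac{(1\pm u_i)(1\pm v_j)}{1+u_iv_j}$ into row and column scalings of the Cauchy matrix. Your use of the matrix determinant lemma in place of the paper's explicit rank-one normal form $H_1JH_2$ is an immaterial variation.
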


\begin{proof}
Let $J$ be the $m\times m$ matrix with all entries equal to 1.
Consider
 $$
   f(t) =\det(tJ+M_m(\overrightarrow{u},\overrightarrow{v})).
 $$
Since $J$ has rank 1, there are two invertible matrices $H_1$ and
$H_2$ with coefficients independent of the variable $t$, such that
$$
  H_1 J H_2 =
  \begin{pmatrix}
   1 & 0 & \dots & 0 \\
   0 & 0 & \dots & 0 \\
   \vdots & \vdots & \ddots & \vdots \\
   0 & 0 & \dots & 0
  \end{pmatrix}.
$$
Since $f(t) = \det{H_1}^{-1}  \det (t H_1 J H_2 + H_1
M_m(\overrightarrow{u},\overrightarrow{v}) H_2) \det{H_2}^{-1}$, the
function $f$ is linear with respect to $t$. In particular, $\det M_m
(\overrightarrow{u},\overrightarrow{v})= f(0)= (f(1)+f(-1))/2$. On
the other hand,
\begin{eqnarray*}
 f(1) \!&=&\! \det \left( 1+ \frac{u_i+v_j}{1+u_i v_j} \right)_{i,j=0,\dots m-1}
     =   \det \left(\frac{(1+u_i)(1+v_j)}{1+u_i v_j} \right)_{i,j=0,\dots,m-1}
     \\
     \!&=&\!  \prod_{i=0}^{m-1} (1+u_i)
        \prod_{j=0}^{m-1} (1+v_j)
        \cdot \det \left(\frac{1}{1+u_i v_j} \right)_{i,j=0,\dots,m-1},
\end{eqnarray*}
In a similar way
\begin{eqnarray*}
 f(-1) &=& \det \left( -1+ \frac{u_i+v_j}{1+u_i v_j} \right)_{i,j=0,\dots m-1}
      \\
     &=&   \det \left(\frac{-(1-u_i)(1-v_j)}{1+u_i v_j} \right)_{i,j=0,\dots,m-1}
     \\
     &=&  (-1)^m \prod_{i=0}^{m-1} (1-u_i)
        \prod_{j=0}^{m-1} (1-v_j)
        \cdot \det \left(\frac{1}{1+u_i v_j} \right)_{i,j=0,\dots,m-1}.
\end{eqnarray*}
Combining this with (\ref{eq:CauchyId1}) we complete the proof.
\end{proof}

Now let $\overrightarrow{x} =(x_1,\dots,x_{m})$, $\overrightarrow{y}
=(y_1,\dots,y_{m})$ and assume that $1+x_iy_j\neq 0$, $1+x_i\neq 0$,
$1+y_j\neq 0$, $i,j=1,\dots,m$. Let
$W_m(\overrightarrow{x},\overrightarrow{y})$ be the following
$(m+1)\times(m+1)$ matrix:
$$
  W_m(\overrightarrow{x},\overrightarrow{y}) =
  \left(
  \begin{array}{c|ccc}
     0 & 1 & \dots & 1 \\
    \hline
     1 &   &       & \\
     \vdots &  &  M_m(\overrightarrow{x}, \overrightarrow{y}) & \\
     1 &   &  &
  \end{array}
  \right).
$$

\begin{thm}
  \label{lem:detCauchy2}
  We have    $\det W_m(\overrightarrow{x},\overrightarrow{y})=$
  \begin{multline*}
     =
         - \frac{1}{2} \biggl(
             \prod_{i=1}^{m} (1+x_i) \prod_{j=1}^{m} (1+y_j)
             -(-1)^m \prod_{i=1}^{m} (1-x_i) \prod_{j=1}^{m} (1-y_j)
          \biggr)
          \\
          \times
           \prod_{1\le i<j \le m} (x_i-x_j)
                \prod_{1\le i<j \le m} (y_j-y_i)
                \prod_{i=1}^{m} \prod_{j=1}^{m} (1+x_iy_j)^{-1}.
 \end{multline*}
\end{thm}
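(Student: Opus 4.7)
The plan is to reduce $\det W_m(\overrightarrow{x},\overrightarrow{y})$ to an instance of Theorem~\ref{lem:detCauchy1} via a specialization trick. Observe that if we substitute $u_0=v_0=1$ into the Cauchy-type matrix $M_{m+1}(u_0,x_1,\dots,x_m;v_0,y_1,\dots,y_m)$, then every entry in its first row and first column collapses to $1$, since $(1+y_j)/(1+y_j)=1$ and likewise for the column, while the $(0,0)$-entry becomes $(1+1)/(1+1)=1$. Denote this specialized $(m+1)\times(m+1)$ matrix by $\widehat W_m$; it differs from $W_m$ solely in its top-left entry. Multilinearity of the determinant in the first row then yields
\begin{equation*}
\det \widehat W_m - \det W_m = \det M_m(\overrightarrow{x},\overrightarrow{y}),
\end{equation*}
because the row difference $(1,0,\dots,0)$ leaves a block that expands trivially to $\det M_m$.

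The next step is to apply Theorem~\ref{lem:detCauchy1} to both $\det \widehat W_m=\det M_{m+1}(1,\overrightarrow{x};1,\overrightarrow{y})$ and $\det M_m(\overrightarrow{x},\overrightarrow{y})$. The key simplification on the $(m+1)$-side is that $u_0=1$ forces $\prod_{i=0}^m(1-u_i)=0$, which kills one of the two summands in the formula of Theorem~\ref{lem:detCauchy1} and leaves only the $\prod(1+u_i)\prod(1+v_j)$ term. The remaining Vandermonde and Cauchy-type products require careful bookkeeping: the boundary factors of $\prod_{0\le i<j\le m}(u_i-u_j)$ produce $\prod_{j=1}^m(1-x_j)$, those of $\prod_{0\le i<j\le m}(v_j-v_i)$ produce $(-1)^m\prod_{j=1}^m(1-y_j)$, and the boundary of the $\prod(1+u_iv_j)^{-1}$ factor (together with the $\tfrac12$ from the $(0,0)$-entry) cancels exactly $\prod(1+x_i)\prod(1+y_j)$ and an extra $2$. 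After collecting, I expect
\begin{equation*}
\det \widehat W_m = (-1)^m\prod_{i=1}^{m}(1-x_i)\prod_{j=1}^{m}(1-y_j)\cdot \Xi,
\end{equation*}
where $\Xi$ stands for $\prod_{1\le i<j\le m}(x_i-x_j)\prod_{1\le i<j\le m}(y_j-y_i)\prod_{i,j=1}^m(1+x_iy_j)^{-1}$, the common product appearing in the statement of Theorem~\ref{lem:detCauchy2}.

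Subtracting $\det M_m(\overrightarrow{x},\overrightarrow{y})$ as given by Theorem~\ref{lem:detCauchy1}, the two $(-1)^m\prod(1-x_i)\prod(1-y_j)$ contributions combine via $(-1)^m-\tfrac12(-1)^m=\tfrac12(-1)^m$, and the claimed expression with its overall minus sign falls out at once. The main obstacle is purely computational, namely tracking how the three families of products in Theorem~\ref{lem:detCauchy1} behave under the specialization $u_0=v_0=1$ so that all extraneous factors cancel and only $\Xi$ remains; no new conceptual ingredient beyond multilinearity of the determinant and the vanishing of $\prod(1-u_i)$ at $u_0=1$ is needed.
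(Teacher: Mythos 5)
Your proposal is correct and follows essentially the same route as the paper: both write $\det W_m(\overrightarrow{x},\overrightarrow{y})=\det M_{m+1}((1,\overrightarrow{x}),(1,\overrightarrow{y}))-\det M_m(\overrightarrow{x},\overrightarrow{y})$ via expansion along the first row, then apply Theorem~\ref{lem:detCauchy1} to both terms, using $1-u_0=0$ to kill one summand in the $(m+1)$-case. Your bookkeeping of the boundary factors and the final combination $(-1)^m-\tfrac12(-1)^m=\tfrac12(-1)^m$ matches the paper's computation exactly.
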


\begin{proof}
Since
$$
  M_{m+1}((1,\overrightarrow{x}),(1,\overrightarrow{y})) =
  \left(
  \begin{array}{c|ccc}
     1 & 1 & \dots & 1 \\
    \hline
     1 &   &       & \\
     \vdots &  &  M_m(\overrightarrow{x}, \overrightarrow{y}) & \\
     1 &   &  &
  \end{array}
  \right),
$$
we have that
\begin{eqnarray}
  \label{eq:detW}
 &&\det  W_{m}(\overrightarrow{x},\overrightarrow{y})
  \\
 \nonumber
 && \qquad =
  \det M_{m+1}((1,\overrightarrow{x}),(1,\overrightarrow{y}))
   - \det
  \left(
  \begin{array}{c|ccc}
     1 & 0 & \dots & 0 \\
    \hline
     1 &   &       & \\
     \vdots &  &  M_m(\overrightarrow{x}, \overrightarrow{y}) & \\
     1 &   &  &
  \end{array}
  \right)
  \\
  \nonumber
  && \qquad =
  \det M_{m+1}((1,\overrightarrow{x}),(1,\overrightarrow{y})) -
  \det M_{m}(\overrightarrow{x},\overrightarrow{y}).
\end{eqnarray}
By Theorem~\ref{lem:detCauchy1},
 \begin{eqnarray*}
  &&\det M_{m+1}((1,\overrightarrow{x}),(1,\overrightarrow{y}))
  = \frac{(1+1)(1+1)}{2(1+1)} \prod_{i=1}^m (1+x_i) \prod_{j=1}^m (1+y_j)
   \\
   && \qquad
   \times
    \prod_{j=1}^m (1-x_j)
    \prod_{1\le i<j \le m} (x_i-x_j)
    \prod_{j=1}^m (y_j-1)
    \prod_{1\le i<j \le m} (y_j-y_i)
   \\
   && \qquad
   \times
    \prod_{j=1}^m (1+y_j)^{-1} \prod_{i=1}^m (1+x_i)^{-1}
    \prod_{i=1}^m \prod_{j=1}^m (1+x_i y_j)^{-1} +0
   \\
   && =
    (-1)^m
    \prod_{i=1}^m (1-x_i)
    \prod_{j=1}^m (1-y_j)
    \prod_{1\le i<j \le m} (x_i-x_j)
    \prod_{1\le i<j \le m} (y_j-y_i)
   \\
   && \qquad \times
    \prod_{i=1}^m \prod_{j=1}^m (1+x_i y_j)^{-1}.
 \end{eqnarray*}
Applying Theorem~\ref{lem:detCauchy1} to the evaluation of $\det
M_{m}(\overrightarrow{x},\overrightarrow{y})$ and using
(\ref{eq:detW}) we complete the proof.
\end{proof}

\section{Evaluation of the determinant}

Recall that $n=(p-1)/2$. Let $G$ be the diagonal matrix with
 \begin{equation}
  \label{eq:defG}
 G_{00}=1,\quad G_{ii}=\genfrac{(}{)}{}{}{i}{p}\zeta^i,\quad i=1,\dots,n.
  \end{equation}
Set
 \begin{equation}
 \label{eq:defW}
 W=GUG.
 \end{equation}
By a direct computation, $W_{00}=0$,
\begin{eqnarray*}
  W_{0j} &=& W_{j0}=1, \quad j=1,\dots,n,
  \\
  W_{ij} &=& \frac{\genfrac{(}{)}{}{}{i}{p}\zeta^i+\genfrac{(}{)}{}{}{j}{p}\zeta^j}
                  {1+\genfrac{(}{)}{}{}{i}{p}\genfrac{(}{)}{}{}{j}{p}\zeta^{i+j}}
  , \quad i,j=1,\dots,n.
\end{eqnarray*}
In particular, $W=W_n(\overrightarrow{x},\overrightarrow{y})$, where
$x_i=y_i=\genfrac{(}{)}{}{1}{i}{p}\zeta^i$, $i=1,\dots, n$. Now we
apply Theorem~\ref{lem:detCauchy2}. The last product in the
expression for $\det W_n$ can be transformed in the following way:
first we extract terms with $i=j$ and then we join together the two
factors corresponding to $(i,j)$ and $(j,i)$ for $i\neq j$. Taking
into account that $(-1)^n=(-1)^{(p-1)/2}=1$ for $p\equiv1\pmod 4$, we
conclude that
\begin{multline}
  \det W =
         -\frac{(-1)^{n(n-1)/2}}{2} \biggl(
             \prod_{j=1}^{n} \left(1 +\genfrac{(}{)}{}{}{j}{p} \zeta^j\right)^2
             - \prod_{j=1}^{n}
              \left(1 -\genfrac{(}{)}{}{}{j}{p} \zeta^j\right)^2
          \biggr)
          \\
  \label{eq:detW1}
          \times\!\!
           \prod_{1\le i<j \le n} \!
              \left (\genfrac{(}{)}{}{}{i}{p} \zeta^i-
              \genfrac{(}{)}{}{}{j}{p} \zeta^j\right)^2
           \prod_{1\le i<j \le n}
              \left (1+\genfrac{(}{)}{}{}{i}{p}
              \genfrac{(}{)}{}{}{j}{p} \zeta^{i+j}\right)^{-2}
                \prod_{j=1}^{n}  (1+\zeta^{2j})^{-1}.
\end{multline}

The following auxiliary result is an easy consequence of standard
methods of evaluating Gauss sums. We present its proof for the sake
of completeness.

\begin{lem}
  \label{lem:Gauss}
 If $p\nmid r$, then
   \begin{equation}
     \label{eq:Gauss1}
   \prod_{j=1}^n (\zeta^{2rj}-\zeta^{-2rj}) = \genfrac{(}{)}{}{}{r}{p} \sqrt{p}.
   \end{equation}
\end{lem}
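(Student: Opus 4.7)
The plan is to reduce the product to the case $r=1$ by Gauss's lemma, and then evaluate the resulting single product by squaring and a sign determination.

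\textbf{Step 1 (reduction).} Since $p$ is odd and $p\nmid r$, the residues $2rj\bmod p$ ($1\le j\le n$) are distinct, and no two are negatives of each other: $2rj\equiv -2rj'\pmod p$ would force $p\mid j+j'$, which is impossible for $j,j'\in\{1,\dots,n\}$ because $2\le j+j'\le p-1$. Hence $\{2rj\bmod p:1\le j\le n\}$ is a set of representatives of $(\mathbb{Z}/p\mathbb{Z})^*/\{\pm 1\}$. Writing $2rj\equiv\epsilon_j k_j\pmod p$ with $k_j\in\{1,\dots,n\}$ and $\epsilon_j\in\{\pm 1\}$, the map $j\mapsto k_j$ is a permutation of $\{1,\dots,n\}$, so
$$
\prod_{j=1}^n(\zeta^{2rj}-\zeta^{-2rj})=\Bigl(\prod_{j=1}^n\epsilon_j\Bigr)\prod_{k=1}^n(\zeta^k-\zeta^{-k}).
$$
Gauss's lemma identifies $\prod_j\epsilon_j$ with $\genfrac{(}{)}{}{}{2r}{p}$.

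\textbf{Step 2 (evaluation of $\tilde h:=\prod_{k=1}^n(\zeta^k-\zeta^{-k})$).} Using $(\zeta^{2k}-1)(\zeta^{-2k}-1)=-(\zeta^k-\zeta^{-k})^2$ and the fact that $\{\pm 2k\bmod p:1\le k\le n\}$ exhausts $(\mathbb{Z}/p\mathbb{Z})^*$, one gets
$$
(-1)^n\tilde h^2=\prod_{k=1}^n(\zeta^{2k}-1)(\zeta^{-2k}-1)=\prod_{\ell=1}^{p-1}(\zeta^\ell-1)=p,
$$
so $\tilde h^2=p$ since $n=(p-1)/2$ is even for $p\equiv 1\pmod 4$. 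To fix the sign I would write $\tilde h=(2i)^n\prod_{k=1}^n\sin(2\pi k/p)$: the sines are all positive, so $\tilde h=i^n\sqrt p=(-1)^{(p-1)/4}\sqrt p$. The supplementary law $\genfrac{(}{)}{}{}{2}{p}=(-1)^{(p-1)/4}$ (valid for $p\equiv 1\pmod 4$) then yields $\tilde h=\genfrac{(}{)}{}{}{2}{p}\sqrt p$.

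Combining the two steps,
$$
\prod_{j=1}^n(\zeta^{2rj}-\zeta^{-2rj})=\genfrac{(}{)}{}{}{2r}{p}\cdot\genfrac{(}{)}{}{}{2}{p}\sqrt p=\genfrac{(}{)}{}{}{r}{p}\sqrt p,
$$
as required. The main delicacy is the sign of $\tilde h$: the squaring argument alone pins it down only up to $\pm$, and the clean cancellation that converts $\genfrac{(}{)}{}{}{2r}{p}$ into $\genfrac{(}{)}{}{}{r}{p}$ relies on the spurious factor $\genfrac{(}{)}{}{}{2}{p}$ coming from Gauss's lemma being matched exactly by the $\genfrac{(}{)}{}{}{2}{p}$ that appears in the sign of $\tilde h$.
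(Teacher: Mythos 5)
Your proof is correct, but it takes a genuinely different route from the paper's. The paper gets the base case $\sqrt{p}=\prod_{j=1}^n(\zeta^{2j}-\zeta^{-2j})$ by citing the classical Gauss-sum product evaluation (Ireland--Rosen, Prop.~6.4.3) and a reindexing of odd exponents, and then transports it to general $r$ by applying the field automorphism $\zeta\mapsto\zeta^{r}$ together with the transformation law $\gamma(\sqrt{p})=\genfrac{(}{)}{}{1}{r}{p}\sqrt{p}$ for Gauss sums. You replace the Galois-theoretic step by an explicit permutation-and-signs argument via Gauss's lemma (which correctly produces the factor $\genfrac{(}{)}{}{1}{2r}{p}$, since $2rj\not\equiv\pm 2rj'$ for $j\neq j'$ in $\{1,\dots,n\}$), and you replace the citation by a self-contained evaluation of $\prod_{k=1}^n(\zeta^{k}-\zeta^{-k})$: squaring against $\prod_{\ell=1}^{p-1}(\zeta^\ell-1)=p$, fixing the sign by positivity of $\sin(2\pi k/p)$ for $1\le k\le n$, and invoking $\genfrac{(}{)}{}{1}{2}{p}=(-1)^{(p-1)/4}$ for $p\equiv 1\pmod 4$ (a fact the paper itself uses later, in Corollary~\ref{cor:Gauss}). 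Your normalization of the base product over exponents $k$ rather than $2k-1$ introduces an extra $\genfrac{(}{)}{}{1}{2}{p}$ that exactly cancels the one from Gauss's lemma, and you handle that cancellation correctly. The trade-off: your argument is more elementary and self-contained, while the paper's is shorter and leverages standard Gauss-sum machinery that is already in play elsewhere in the article.
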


\begin{proof}
 By \cite[Proposition 6.4.3]{IR},
 \begin{eqnarray*}
   \sqrt{p} &=& \prod_{k=1}^n (\zeta^{2k-1}-\zeta^{-(2k-1)})
     = (-1)^n \prod_{k=1}^n (\zeta^{p-(2k-1)}-\zeta^{-p+(2k-1)})
   \\
     &=& \prod_{j=1}^n (\zeta^{2j}-\zeta^{-2j}).
 \end{eqnarray*}
Let us apply the automorphism $\gamma$ of $\mathbb{Q}(\zeta)$ induced
by $\gamma(\zeta)=\zeta^r$. It follows from the standard properties
of Gauss sums that
$\gamma(\sqrt{p})=\genfrac{(}{)}{}{1}{r}{p}\sqrt{p}$, which completes
the proof.
\end{proof}

\begin{cor}
  \label{cor:Gauss}
  We have
\begin{eqnarray}
   \label{eq:Gauss2}
   && \prod_{j=1}^n (\zeta^{j/2}-\zeta^{-j/2}) = \genfrac{(}{)}{}{}{2}{p} \sqrt{p}
   =(-1)^{n/2} \sqrt{p},
   \\
   \label{eq:Gauss3}
   && \prod_{j=1}^n (1+\zeta^{2j}) = \zeta^{n(n+1)/2} \genfrac{(}{)}{}{}{2}{p}.
\end{eqnarray}
\end{cor}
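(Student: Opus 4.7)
The plan is to handle the two identities separately, with Lemma~\ref{lem:Gauss} and one classical sine product being the only nontrivial inputs.

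For (\ref{eq:Gauss2}), I would exploit the chosen branch $\arg\zeta^{1/2}=\pi/p$ to rewrite each factor as $\zeta^{j/2}-\zeta^{-j/2}=2i\sin(\pi j/p)$. The classical identity $\prod_{j=1}^{p-1}\sin(\pi j/p)=p/2^{p-1}$, combined with the symmetry $\sin(\pi(p-k)/p)=\sin(\pi k/p)$ and positivity of each factor, immediately yields $\prod_{j=1}^{n}\sin(\pi j/p)=\sqrt p/2^{n}$. Pulling out $(2i)^n$ then expresses the product as $i^n\sqrt p$, and since $n=(p-1)/2$ is even one has $i^n=(-1)^{(p-1)/4}$. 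A one-line case check modulo~$8$ identifies this sign with the supplementary Legendre symbol $\genfrac{(}{)}{}{}{2}{p}$, establishing (\ref{eq:Gauss2}).

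For (\ref{eq:Gauss3}), I would begin with the factorization $1+\zeta^{2j}=\zeta^{j}(\zeta^{j}+\zeta^{-j})$, which pulls out $\zeta^{n(n+1)/2}$ and reduces the claim to $\prod_{j=1}^{n}(\zeta^{j}+\zeta^{-j})=\genfrac{(}{)}{}{}{2}{p}$. Applying Lemma~\ref{lem:Gauss} with $r=2$ together with the factorization $\zeta^{4j}-\zeta^{-4j}=(\zeta^{2j}-\zeta^{-2j})(\zeta^{2j}+\zeta^{-2j})$ and the $r=1$ case of the same lemma yields $\prod_{j=1}^n(\zeta^{2j}+\zeta^{-2j})=\genfrac{(}{)}{}{}{2}{p}$. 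To replace the exponents $2j$ by $j$ in this product, I would invoke that multiplication by~$2$ is a bijection on $(\mathbb{Z}/p)^{\times}/\{\pm 1\}$, so the multisets $\{\pm 2j\bmod p:j=1,\ldots,n\}$ and $\{\pm k\bmod p:k=1,\ldots,n\}$ coincide; since $\zeta^{k}+\zeta^{-k}$ depends only on the $\pm$-class of $k\bmod p$, this substitution is permissible and (\ref{eq:Gauss3}) follows.

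There is no serious obstacle; both parts are short calculations built on the lemma. The only delicate point lies in (\ref{eq:Gauss2}), where tying $i^n$ to $\genfrac{(}{)}{}{}{2}{p}$ depends both on the prescribed branch $\arg\zeta^{1/2}=\pi/p$ and on the residue of $p$ modulo~$8$.
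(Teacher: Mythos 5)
Your proof is correct, and it diverges from the paper's in one substantive way. For (\ref{eq:Gauss2}) the paper stays inside the framework of Lemma~\ref{lem:Gauss}: it writes $\zeta^{1/2}=-\zeta^{(p+1)/2}$, extracts the sign $(-1)^{n/2}$ from the factors with odd $j$, and applies the lemma with $r=(1-p)/4$; you instead bypass the lemma entirely via $\zeta^{j/2}-\zeta^{-j/2}=2i\sin(\pi j/p)$ and the elementary product $\prod_{j=1}^{p-1}\sin(\pi j/p)=p/2^{p-1}$, which gives $i^{n}\sqrt p=(-1)^{(p-1)/4}\sqrt p$ and then $\genfrac{(}{)}{}{}{2}{p}$ by the supplementary law. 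Your route is more elementary for this half (it needs only the positive real square root, no Gauss-sum sign determination), though like the paper it depends on the prescribed branch $\arg\zeta^{1/2}=\pi/p$. For (\ref{eq:Gauss3}) the two arguments rest on the same factorization, since $\zeta^{j}(\zeta^{j}+\zeta^{-j})=\zeta^{j}(\zeta^{2j}-\zeta^{-2j})(\zeta^{j}-\zeta^{-j})^{-1}$: the paper divides the $r=1$ case of the lemma by the $r=(p+1)/2$ case, while you divide the $r=2$ case by the $r=1$ case and then transport the product over exponents $2j$ back to exponents $j$ using the bijection of $(\mathbb{Z}/p)^{\times}/\{\pm1\}$ induced by doubling --- a legitimate reindexing that plays exactly the role of the paper's choice $r=(p+1)/2$. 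All steps check out.
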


\begin{proof}
We have $\zeta^{1/2} =- \zeta^{(p+1)/2}$. Hence,
 \begin{eqnarray*}
  \prod_{j=1}^n (\zeta^{j/2}-\zeta^{-j/2}) &=&
  \prod_{j=1}^n ((-\zeta^{(p+1)/2})^j-(-\zeta^{(p+1)/2})^{-j} )
  \\
  &=&
  (-1)^{n/2}   \prod_{j=1}^n ((\zeta^{(p+1)/2})^j-(\zeta^{(p+1)/2})^{-j} ).
 \end{eqnarray*}
(Here we extract $-1$ from each term that corresponds to an odd $j$.)
The  first desired identity now follows from Lemma~\ref{lem:Gauss}
applied to $r=(1-p)/4$ and from the fact that, for $p\equiv 1\pmod
4$, $(-1)^{n/2}=(-1)^{(p-1)/4} = \genfrac{(}{)}{}{1}{2}{p}$. To prove
the second identity we use the equality $1+\zeta^{2j}=\zeta^j
(\zeta^{2j}-\zeta^{-2j})(\zeta^{j}-\zeta^{-j})^{-1}$ and
Lemma~\ref{lem:Gauss} applied to $r=1$ and $r=(p+1)/2$.
\end{proof}

\begin{lem}
  \label{lem:spec_fact}
  We have
 $$ \frac{1}{2}
    \biggl(
    \prod_{j=1}^n \left(1+\genfrac{(}{)}{}{}{j}{p}\zeta^j\right)^2
          - \prod_{j=1}^n \left(1-\genfrac{(}{)}{}{}{j}{p}\zeta^j\right)^2
    \biggr)
    = (-1)^{n/2} \zeta^{n(n+1)/2}  a \sqrt{p},
 $$
where $a$ is defined in {\rm (\ref{eq:defa})}.
\end{lem}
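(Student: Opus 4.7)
\noindent\emph{Proof plan.} Put $\chi(a)=\genfrac{(}{)}{}{1}{a}{p}$. The plan is to reduce both products to trigonometric expressions via a half-angle factorisation, and then combine them using Dirichlet's class number formula. First, rewriting
$$1\pm\chi(j)\zeta^j = \zeta^{j/2}\bigl(\zeta^{-j/2}\pm\chi(j)\zeta^{j/2}\bigr),$$
each factor equals $2\zeta^{j/2}\cos(\pi j/p)$ when $\chi(j)$ matches the outer sign, and $-2i\,\zeta^{j/2}\sin(\pi j/p)$ when it does not. Split $\{1,\ldots,n\}$ into the set of quadratic residues $S_+$ and non-residues $S_-$ (each of cardinality $n/2$, since $\chi(-1)=1$), and set
$$\alpha=\!\!\prod_{j\in S_+}\!\!\cos(\pi j/p),\ \beta=\!\!\prod_{j\in S_+}\!\!\sin(\pi j/p),\ \gamma=\!\!\prod_{j\in S_-}\!\!\cos(\pi j/p),\ \delta=\!\!\prod_{j\in S_-}\!\!\sin(\pi j/p).$$
Then $\prod_{j=1}^n(1+\chi(j)\zeta^j)=\zeta^{n(n+1)/4}\,2^n(-i)^{n/2}\alpha\delta$ and $\prod_{j=1}^n(1-\chi(j)\zeta^j)=\zeta^{n(n+1)/4}\,2^n(-i)^{n/2}\beta\gamma$. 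Since $(-i)^n=(-1)^{n/2}$ for even $n$, the lemma reduces to the real identity
$$2^{2n-1}\bigl((\alpha\delta)^2-(\beta\gamma)^2\bigr)=a\sqrt{p}.$$

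Next, I will establish $\alpha\gamma=1/2^n$ and $\beta\delta=\sqrt{p}/2^n$. The second is $\prod_{j=1}^n 2\sin(\pi j/p)=\sqrt{p}$, while the first follows from $2\cos(\pi j/p)=\sin(2\pi j/p)/\sin(\pi j/p)$ together with $\prod_{j=1}^n\sin(2\pi j/p)=\prod_{j=1}^n\sin(\pi j/p)$ (the map $k\mapsto p-k$ interchanges even and odd residues modulo $p$). Setting $X=\alpha/\gamma$ and $Y=\delta/\beta$, one factorises
$$(\alpha\delta)^2-(\beta\gamma)^2 = (\alpha\delta)(\beta\gamma)\bigl(XY-(XY)^{-1}\bigr) = \frac{\sqrt{p}}{4^n}\bigl(XY-(XY)^{-1}\bigr),$$
so the remaining task is to prove $XY-(XY)^{-1}=2a$. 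Since $h$ is odd for real quadratic fields of prime discriminant (by genus theory), $N(\varepsilon^h)=(-1)^h=-1$, hence $a^2-pb^2=-1$ and $(a+b\sqrt{p})^{-1}=-(a-b\sqrt{p})$. Thus the target holds once we show $XY=a+b\sqrt{p}$.

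The value of $XY$ will be extracted from Dirichlet's class number formula $\prod_{a=1}^{p-1}\sin(\pi a/p)^{\chi(a)}=\varepsilon^{-2h}$. The symmetry of $S_\pm$ under $a\mapsto p-a$ yields $\beta^2/\delta^2=\varepsilon^{-2h}$, so $Y=\varepsilon^h$ (positive root, since $\beta,\delta>0$). For the cosine ratio, the substitution $a'\equiv 2a\pmod{p}$ in $2\cos(\pi a/p)=\sin(2\pi a/p)/\sin(\pi a/p)$ introduces the character value $\chi(2)$ and, after the sign bookkeeping noted below, gives $X^2=\varepsilon^{2h(1-\chi(2))}$. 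For $p\equiv 1\pmod 8$ this equals $1$, so $X=1$ and $XY=\varepsilon^h$; for $p\equiv 5\pmod 8$ it equals $\varepsilon^{4h}$, so $X=\varepsilon^{2h}$ and $XY=\varepsilon^{3h}$. In either case $XY=a+b\sqrt{p}$ by \eqref{eq:defa}, as required.

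The main obstacle will be the sign accounting in the cosine step: the substitution $a'\equiv 2a\pmod{p}$ flips the sign of $\sin(2\pi a/p)$ for $a>n$, and independently $\cos(\pi a/p)<0$ for such $a$. These two sign contributions must be tracked carefully and they combine to $+1$ via $\sum_{a=1}^n\chi(a)=0$. Once these details are handled, the remainder is routine trigonometric and unit-theoretic manipulation.
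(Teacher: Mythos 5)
Your proposal is correct and follows essentially the same route as the paper: half-angle factorisation of $1\pm\chi(j)\zeta^j$, splitting $\{1,\dots,n\}$ by the quadratic character, converting the cosine product via $2\cos(\pi j/p)=\sin(2\pi j/p)/\sin(\pi j/p)$ so that the doubling map introduces $\chi(2)$, and then invoking Dirichlet's class number formula together with $N(\varepsilon^h)=-1$ to produce $\varepsilon^{(2-\chi(2))h}=a+b\sqrt{p}$ and hence $2a$. The ``sign bookkeeping'' you flag as the main obstacle is in fact vacuous in your setup, since all indices stay in $[1,n]$ where $\cos(\pi j/p)>0$ and $\sin(2\pi j/p)>0$; the only point to state cleanly is the standard fact (cited, not proved, in the paper as well) that $\varepsilon$ has norm $-1$ and $h$ is odd.
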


\begin{proof} Let
$$
  s=\prod_{j=1}^n \left(1+\genfrac{(}{)}{}{}{j}{p}\zeta^j\right)^2.
$$
Notice that $(1+\genfrac{(}{)}{}{1}{j}{p}\zeta^j)^2=
(\zeta^j+\genfrac{(}{)}{}{1}{j}{p})^2$. We have
\begin{eqnarray*}
  s&=& \prod_{j=1}^n \left(\zeta^j+\genfrac{(}{)}{}{}{j}{p}\right)^2
    = \zeta^{n(n+1)/2}
    \prod_{j=1}^n \left(\zeta^{j/2}+\genfrac{(}{)}{}{}{j}{p}\zeta^{-j/2}\right)^2
      \\
   &=&
    \zeta^{n(n+1)/2}
    \prod_{\stackrel{1\le j \le n}{(j/p)=-1}} (\zeta^{j/2}-\zeta^{-j/2})^2
    \prod_{\stackrel{1\le j \le n}{(j/p)=1}} (\zeta^{j/2}+\zeta^{-j/2})^2
      \\
   &=&
    \zeta^{n(n+1)/2}
    \prod_{\stackrel{1\le j \le n}{(j/p)=-1}} (\zeta^{j/2}-\zeta^{-j/2})^2
    \prod_{\stackrel{1\le j \le n}{(j/p)=1}} (\zeta^{j/2}-\zeta^{-j/2})^{-2}
  \\
   &&\qquad
    \times
    \prod_{\stackrel{1\le j \le n}{(j/p)=1}} (\zeta^{j}-\zeta^{-j})^{2}
      \prod_{j=1}^n (\zeta^{j/2}-\zeta^{-j/2})^{-1}
      \prod_{j=1}^n (\zeta^{j/2}-\zeta^{-j/2}).
 \end{eqnarray*}
Since $\genfrac{(}{)}{}{1}{\cdot}{p}$ is an even character for
$p\equiv 1\pmod 4$, the number of quadratic residues modulo $p$ on
the interval $[1,n]$ equals the number of quadratic non-residues on
the same interval. Therefore, we can continue as follows
\begin{eqnarray*}
  s &=& \zeta^{n(n+1)/2}
    \prod_{\stackrel{1\le j \le n}{(j/p)=-1}} \left(\sin\frac{\pi j}{p}\right)^2
    \prod_{\stackrel{1\le j \le n}{(j/p)=1}} \left(\sin\frac{\pi j}{p}\right)^{-2}
    \prod_{\stackrel{1\le j \le n}{(j/p)=1}} \left(\sin\frac{2\pi j}{p}\right)^{2}
   \\
   && \qquad \times
      \prod_{j=1}^n \left(\sin\frac{\pi j}{p}\right)^{-1}
      \prod_{j=1}^n (\zeta^{j/2}-\zeta^{-j/2}).
\end{eqnarray*}
Let us consider the third and the fourth products more carefully. We
have $\sin(2\pi j/p)=\sin(\pi(p-2 j)/p)$. Moreover, the map $j\mapsto
p-2j$  gives a bijection between the sets
 $$
 \{ j : n/2<j\le n,\ \genfrac{(}{)}{}{1}{j}{p}=1 \} \text{ and }
 \{ k : 1\le k\le n,\ k \text{ is odd, } \genfrac{(}{)}{}{1}{k}{p}=
 \genfrac{(}{)}{}{1}{2}{p} \},
 $$
while the map $j\mapsto 2j$ is the bijection between
 $$
 \{ j : 1\le j\le n/2,\ \genfrac{(}{)}{}{1}{j}{p}=1 \} \text{ and }
 \{ k : 1\le k\le n,\ k \text{ is even, } \genfrac{(}{)}{}{1}{k}{p}=
 \genfrac{(}{)}{}{1}{2}{p} \}.
 $$
Therefore,
 $$
    \prod_{\stackrel{1\le j \le n}{(j/p)=1}} \left(\sin\frac{2\pi j}{p}\right)^{2}
      \prod_{j=1}^n \left(\sin\frac{\pi j}{p}\right)^{-1}
   =\!
    \prod_{\stackrel{1\le k \le n}{(k/p)=(2/p)}}\!\!
    \sin\frac{\pi k}{p}
    \prod_{\stackrel{1\le k \le n}{(k/p)=-(2/p)}}\!
    \left(\sin\frac{\pi k}{p}\right)^{-1}.
 $$
By Dirichet's class number formula for real quadratic fields (e.g.,
see \cite[Ch.~5, \S4]{BS}),
 $$
  \varepsilon^h=
    \prod_{\stackrel{1\le j \le n}{(j/p)=-1}} \sin\frac{\pi j}{p}
    \prod_{\stackrel{1\le j \le n}{(j/p)=1}} \left(\sin\frac{\pi j}{p}\right)^{-1}.
 $$
Combining with the above equalities, we conclude that
\begin{equation}
 \label{eq:prod1}
  \prod_{j=1}^n \left(1+\genfrac{(}{)}{}{}{j}{p}\zeta^j\right)^2
  = \zeta^{n(n+1)/2} \varepsilon^{(2-(2/p))h}
    \prod_{j=1}^n (\zeta^{j/2}-\zeta^{-j/2}).
\end{equation}
In a similar way,
\begin{equation}
 \label{eq:prod2}
  \prod_{j=1}^n \left(1-\genfrac{(}{)}{}{}{j}{p}\zeta^j\right)^2
  = \zeta^{n(n+1)/2} \varepsilon^{-(2-(2/p))h}
    \prod_{j=1}^n (\zeta^{j/2}-\zeta^{-j/2}).
\end{equation}

It is well known that $\varepsilon^h$ is a quadratic unit of norm
$-1$ (which is equivalent to the fact that the norm of $\varepsilon$
is $-1$ and $h$ is odd); e.g. see \cite[Ch.~4, \S18.4]{Ha} or
\cite[Ch.~5, Sec.~4, Ex.~5]{BS}. It follows that
 \begin{equation}
  \label{eq:prod3}
  \varepsilon^{(2-(2/p))h}-\varepsilon^{-(2-(2/p))h}=2a,
 \end{equation}
where $a$ is defined in (\ref{eq:defa}).

Applying Corollary~\ref{cor:Gauss} to the evaluation of
$\prod_{j=1}^n (\zeta^{j/2}-\zeta^{-j/2})$ we complete the proof.
\end{proof}

\begin{proof}[Proof of Theorem~\ref{thm:det1}]
It follows from Theorem~\ref{lem:decomp1}, Lemma~\ref{lem:spec_fact}
and equations (\ref{eq:defW}), (\ref{eq:detW1}), (\ref{eq:Gauss3})
that
\begin{multline}
  \label{eq:detC2}
  \det C = -\zeta^{(n+1)(p-1)/4}
     \left( \genfrac{(}{)}{}{}{2}{p}\sqrt{p} \right)^{n+2}
    \\
     \times (\det V)^2 (\det D)^2 (\det G)^{-2} f_1^2 f_2^{-2} a,
\end{multline}
where
\begin{eqnarray*}
 f_1 &=&           \prod_{1\le i<j \le n}
              \left (\genfrac{(}{)}{}{}{i}{p} \zeta^i-
              \genfrac{(}{)}{}{}{j}{p} \zeta^j\right),
\\
 f_2 &=&
           \prod_{1\le i<j \le n}
              \left (1+\genfrac{(}{)}{}{}{i}{p}
              \genfrac{(}{)}{}{}{j}{p} \zeta^{i+j}\right)
\end{eqnarray*}
and $a$ is defined by (\ref{eq:defa}). Clearly,  $\det G\in (\mathbb{Z}[\zeta])^\ast$ by (\ref{eq:defG}).

Let us recall some well-known facts about the arithmetic of the ring
$\mathbb{Z}[\zeta]$. The reader may find further details in \cite[Ch.
13]{IR}. The ideal $(1-\zeta)$ is prime in $\mathbb{Z}[\zeta]$ and
$p=\alpha (1-\zeta)^{p-1}$, where $\alpha$ is a unit in
$\mathbb{Z}[\zeta]$. In particular, $\sqrt{p}=\tau_p(1)=\alpha_1
(1-\zeta)^{n}$, where $\alpha_1 \in (\mathbb{Z}[\zeta])^\ast$.
Finally, $(1-\zeta^c)/(1-\zeta)$ and $1+\zeta^c$ are in
$(\mathbb{Z}[\zeta])^\ast$ provided $p\nmid c$.

Notice that
 $$
  \det V = \prod_{0\le i<j\le n} (\zeta^{2j}-\zeta^{2i})
 $$
by the well-known evaluation of the Vandermonde determinant. Using
this together with the definition of $D$ (see (\ref{eq:defD}),
(\ref{eq:defD2})) and the above observations we find that
 $$
 (\sqrt{p})^{n+1}  (\det V)^2 (\det D)^2 \in
 (\mathbb{Z}[\zeta])^\ast.
 $$

Since $\genfrac{(}{)}{}{1}{\cdot}{p}$ is an even character and
$n=(p-1)/2$, there are $n/2$ quadratic residues and $n/2$ quadratic
non-residues modulo $p$ on the interval $[1,n]$. Thus,
\begin{eqnarray*}
  && \#\{ (i,j) : 1\le i,j\le n,\
   \genfrac{(}{)}{}{1}{i}{p}=\genfrac{(}{)}{}{1}{j}{p}
   \}  = n^2/2,
  \\
  && \#\{ (i,j) : 1\le i,j\le n,\
   \genfrac{(}{)}{}{1}{i}{p}=-\genfrac{(}{)}{}{1}{j}{p}
   \}  = n^2/2.
\end{eqnarray*}
Since the pairs $(i,i)$ are in the first set and the pairs $(i,j)$
and $(j,i)$ are in one and the same set,
\begin{eqnarray*}
  && \#\{ (i,j) : 1\le i<j\le n,\
   \genfrac{(}{)}{}{1}{i}{p}=\genfrac{(}{)}{}{1}{j}{p}
   \}  = n(n-2)/4,
  \\
  && \#\{ (i,j) : 1\le i<j\le n,\
   \genfrac{(}{)}{}{1}{i}{p}=-\genfrac{(}{)}{}{1}{j}{p}
   \}  = n^2/4.
\end{eqnarray*}
In addition,
\begin{eqnarray*}
   \genfrac{(}{)}{}{1}{i}{p} \zeta^i-   \genfrac{(}{)}{}{1}{j}{p}\zeta^j & \in &
   \left\{
   \begin{array}{l}
     (\mathbb{Z}[\zeta])^\ast, \mbox{ if }   \genfrac{(}{)}{}{1}{i}{p}\neq \genfrac{(}{)}{}{1}{j}{p}, \\
     (1-\zeta) (\mathbb{Z}[\zeta])^\ast, \mbox{ if }   \genfrac{(}{)}{}{1}{i}{p}=\genfrac{(}{)}{}{1}{j}{p},
   \end{array}
   \right.
\\
   1+\genfrac{(}{)}{}{1}{i}{p} \genfrac{(}{)}{}{1}{j}{p}\zeta^{i+j} & \in &
   \left\{
   \begin{array}{l}
     (\mathbb{Z}[\zeta])^\ast, \mbox{ if }   \genfrac{(}{)}{}{1}{i}{p}= \genfrac{(}{)}{}{1}{j}{p}, \\
     (1-\zeta) (\mathbb{Z}[\zeta])^\ast, \mbox{ if }   \genfrac{(}{)}{}{1}{i}{p}\neq \genfrac{(}{)}{}{1}{j}{p}.
   \end{array}
   \right.
\end{eqnarray*}
Therefore, $\sqrt{p} f_1^2f_2^{-2} \in (\mathbb{Z}[\zeta])^\ast$.

Finally, notice that $\zeta$ is of odd multiplicative order and
therefore any power of $\zeta$ is a square in $\mathbb{Z}[\zeta]$.
Using (\ref{eq:detC2}) we conclude that $\det C= -a \delta^2$, where
$\delta\in (\mathbb{Z}[\zeta])^\ast$. Since $\det C$ is an integer
and $a$ is a non-zero integer or half-integer, we have that
$\delta^2\in \mathbb{Q}$. Hence, $\delta^2\in\mathbb{Z}^\ast$, i.e.,
$\delta^2=\pm 1$. On the other hand, $\mathbb{Q}(\zeta)$ does not
contain primitive fourth roots of 1, since
$[\mathbb{Q}(\zeta,\sqrt[4]{1}):\mathbb{Q}]=
[\mathbb{Q}(\zeta\cdot\sqrt[4]{1}):\mathbb{Q}]=2(p-1)$ and
$[\mathbb{Q}(\zeta):\mathbb{Q}]=p-1$. Therefore, $\delta^2=1$. This
completes the proof.
\end{proof}

\begin{proof}[Proof of Corollary~\ref{cor:a_neg_even}]
Since $\varepsilon>1$, we have immediately that $a>0$. Hence, $\det
C<0$.

Now write $\varepsilon^h = (\alpha + \beta \sqrt{p})/2$, where
$\alpha$ and $\beta$ are integers. Since $\varepsilon^h$ is a
quadratic unit of norm $-1$ (see \cite[Ch.~4, \S18.4]{Ha} or
\cite[Ch.~5, Sec.~4, Ex.~5]{BS}), we have
\begin{equation}
  \label{eq:normeps}
     \alpha^2-p\beta^2=-4.
\end{equation}
Consider (\ref{eq:normeps}) modulo 16. A direct search shows that
\begin{itemize}
\item for $p \equiv 1, 9 \pmod{16}$,  $\alpha \equiv 0$, 4, 8, or
    12 $ \pmod{16}$, i.e., $a=\alpha/2$ is even;
\item for $p \equiv  5, 13 \pmod{16}$, $\alpha^3+3\alpha\beta^2 p
    \equiv 0 \pmod{16}$.  In particular, raising $\varepsilon^h$
    to the third power, we have that  $a=(\alpha^3+3\alpha\beta^2
    p)/8$ is even.
\end{itemize}
\end{proof}

\section*{Acknowledgements} The author is grateful to J.~Sondow and
T.~Amdeberhan for commenting an earlier version of the paper.

\end{document}